\documentclass[english]{article}
\usepackage[T1]{fontenc}
\usepackage[latin9]{inputenc}
\usepackage{geometry}
\geometry{verbose,lmargin=2cm,rmargin=2cm}
\usepackage{color}
\usepackage{mathrsfs}
\usepackage{amsmath}
\usepackage{amsthm}
\usepackage{amssymb}
\usepackage{stmaryrd}
\usepackage{graphicx}

\makeatletter

\providecommand{\tabularnewline}{\\}
\newcommand{\lyxdot}{.}

\numberwithin{equation}{section}
\numberwithin{figure}{section}
\theoremstyle{plain}
\newtheorem{thm}{\protect\theoremname}
\theoremstyle{definition}
\newtheorem{defn}[thm]{\protect\definitionname}
\theoremstyle{plain}
\newtheorem{fact}[thm]{\protect\factname}
\theoremstyle{remark}
\newtheorem{rem}[thm]{\protect\remarkname}
\theoremstyle{plain}
\newtheorem{prop}[thm]{\protect\propositionname}
\theoremstyle{plain}
\newtheorem{cor}[thm]{\protect\corollaryname}

\date{}

\makeatother

\usepackage{babel}
\providecommand{\corollaryname}{Corollary}
\providecommand{\definitionname}{Definition}
\providecommand{\factname}{Fact}
\providecommand{\propositionname}{Proposition}
\providecommand{\remarkname}{Remark}
\providecommand{\theoremname}{Theorem}

\begin{document}
\global\long\def\pplus{\overset{{\scriptscriptstyle \perp}}{\oplus}}%
\global\long\def\oton#1#2{#1_{1},\dots,#1_{#2}}%
\global\long\def\rk{\text{rk}}%
\global\long\def\tr{\text{tr}}%
\global\long\def\adj{\text{adj}}%
\global\long\def\im{\text{Im}}%
\global\long\def\span{\text{span}}%
\global\long\def\nin{\notin}%
\global\long\def\id{\text{Id}}%
\global\long\def\makbil{\text{parallel}}%
\global\long\def\meridian{\text{meridian}}%
\global\long\def\sgn{\text{Sgn}}%
\global\long\def\fix{\text{Fix}}%
\global\long\def\hess{\text{Hess}}%
\global\long\def\ooton#1#2{#1_{0},\dots,#1_{#2}}%
\global\long\def\otherwise{\text{Otherwise}}%
\global\long\def\norm#1{\left\Vert #1\right\Vert }%
\global\long\def\nil{\text{Nill}}%
\global\long\def\fix{\text{Fix}}%
\global\long\def\spec{\text{Spec}}%
\global\long\def\ind{\text{Ind}}%
 
\global\long\def\hom{\text{Hom}}%
\global\long\def\ob{\text{Ob}}%
\global\long\def\coker{\text{coker}}%
\global\long\def\rad{\text{Rad}}%
\global\long\def\supp{\text{Supp}}%
\global\long\def\aut{\text{Aut}}%
\global\long\def\gal{\text{Gal}}%
\global\long\def\ann{\text{Ann}}%
\global\long\def\mayervi{\text{Mayer-Vietoris}}%
\global\long\def\conv{\text{conv}}%
\global\long\def\diam{\text{diam}}%
\global\long\def\length{\text{length}}%
\global\long\def\tp{\text{tp}}%
\global\long\def\lcm{\text{lcm}}%
\global\long\def\core{\text{Core}}%
\global\long\def\ad{\text{ad}}%
\global\long\def\ord{\text{ord}}%
\global\long\def\rank{\text{rank}}%
\global\long\def\Inn{\text{Inn}}%
\global\long\def\involution{\overline{{\scriptstyle \bigbox}}}%

\title{Classes of free group extensions.}
\author{Noam M.D. Kolodner}
\maketitle
\begin{abstract}
In this paper we identify different classes of free group extension
using core graphs, by further developing machinery from \cite{KOLODNER2020}.
We show that every free group extension $H\leq K\leq F$ has a base
$B$ such that the associated pointed graph morphism $\Gamma_{B}\left(H\right)\to\Gamma_{B}\left(H\right)$
is onto. But if we examine graphs without base points, there is an
extension $\left\langle b\right\rangle \leq\left\langle b,aba^{-1}\right\rangle <F_{\left\{ a,b\right\} }$
such that for every base of $F_{\left\{ a,b\right\} }$ the associated
graph morphisms are injective.
\end{abstract}

\section{Introduction}

In this paper we identify different classes of free group extension
using core graphs, by further developing machinery from \cite{KOLODNER2020}.
Leveraging the theory of topological cover spaces, Stallings \cite{MR695906}
established a correspondence between subgroups of a free group with
labeled graphs called core graphs. Let $F$ be a free group and $B$
a base. For every subgroup $H\leq F$ we associate a pointed labeled
graph $\Gamma_{B}\left(H\right)$ and for subgroups $H\leq K\leq F$
a graph morphism $\Gamma_{B}\left(H\right)\to\Gamma_{B}\left(K\right)$.
Thus we realize the category of subgroups of free groups ordered by
inclusion as a subcategory of the category of pointed labeled graphs. When
we order subgroups by inclusion,\textcolor{red}{{} }these inclusion
have no ``flavor''. But as morphisms in the category of labeled
graphs they have easily detectable properties. Labeled graph morphisms
can be injective or surjective for instance. The problem is that these
properties are incidental and are dependent on the arbitrary base
chosen for constructing the correspondence, so if one wants to leverage
these properties to study algebraic properties of free group extensions
one must look at invariant properties.

The first attempt to do this was made by Miasnikov Ventura and Weil
\cite{MR2395796}, who conjectured that extensions $H\leq K\leq F$,
such that the morphism of the corresponding core graphs is surjective
for every base, are algebraic extensions (i.e. such that $H$ is not
included in any proper free factor of $K$). Puder and Parzanchevski
showed this to be false for subgroups of a free group with two generators
but conjectured that it is still true for free groups with more generators,
or that it is true if one allows automorphisms of free extension of
the ambient free group $F$. The author of this paper found a counter
example for the revised conjectures \cite{KOLODNER2020} and thus
showed that algebraic extensions are strictly included in extensions
that are onto on all bases, which form a separate extension class.
Parzanchevski and Puder \cite{MR3264763} suggested another class
of extensions: one where there \emph{exists} a base for which the
graph morphism is onto. They asked if this is true for all extensions.
Another suggestion proposed by Verd\'{u} in his masters thesis \cite{meunpub2020}
is an extension class where graph morphisms are injective for every
base.

In Theorem \ref{thm:ontoallbase} we show that for every extension
$H\leq K\leq F$ there exists a base such that the morphism of the
corresponding labeled graphs is onto; moreover one obtains this base
by conjugation. Thus the extension class where there exists a base
that is onto includes all extensions and the extension class where
every base is injective is empty. If instead of the category of pointed
labeled graphs we look at graphs without base points, and instead
of automorphisms we look at outer automorphisms we show that in this
setting these classes of extensions are non-trivial. Using the methods
developed in \cite{KOLODNER2020} we show that the extension $\left\langle b\right\rangle \leq\left\langle b,aba^{-1}\right\rangle $
is injective for every outer automorphism. Thus in the new setting
there is a class of extension whose graph morphism is injective on
every base and there is also a class of extensions s.t. there exists
a base where the graph morphism is surjective. In order to prove that
extension $\left\langle b\right\rangle \leq\left\langle b,aba^{-1}\right\rangle $
has the desired property we further develop in this paper the machinery
from \cite{KOLODNER2020}, to deal with cases where a graph does not
have stencil finiteness.

\section{\label{sec:Preliminaries}Preliminaries}

In this paper we use the machinery developed in \cite{KOLODNER2020}.
We present the definitions we use in the paper, merging the language
of \cite{MR1882114} and \cite{MR695906}.
\begin{defn}[Graphs]
We use graphs in the sense of Serre \cite{MR0476875}: A \emph{graph}
$\Gamma$ is a set $V\left(\Gamma\right)$ of vertices and a set $E\left(\Gamma\right)$
of edges with a function $\iota\colon E\left(\Gamma\right)\to V\left(\Gamma\right)$
called the initial vertex map and an involution $\involution\colon E\left(\Gamma\right)\to E\left(\Gamma\right)$
with $\overline{e}\neq e$ and $\overline{\overline{e}}=e$. A \emph{graph
morphism} $f\colon\Gamma\to\Delta$ is a pair of set functions $f^{E}\colon E\left(\Gamma\right)\to E\left(\Delta\right)$
and $f^{V}\colon V\left(\Gamma\right)\to V\left(\Delta\right)$ that
commute with the structure functions. A \emph{path }in $\Gamma$ is
a finite sequence $\oton en\in E\left(\Gamma\right)$ with $\iota\left(\overline{e}_{k}\right)=\iota\left(e_{k+1}\right)$
for every $1\leq k<n$. The path is \emph{closed or circuit} if $\iota\left(\overline{e}_{n}\right)=\iota\left(e_{1}\right)$,
and \emph{reduced} if $e_{k+1}\neq\overline{e}_{k}$ for all $k$.
All the graphs in the paper are assumed to be connected unless specified
otherwise, namely, for every $e,f\in E\left(\Gamma\right)$ there
is a path $e,\oton en,f$.
\end{defn}

\begin{defn}[Labeled graphs]
Let $X$ be a set and let $X^{-1}$ be the set of its formal inverses.
We define $R_{X}$ to be the graph with $E\left(R_{X}\right)=X\cup X^{-1}$,
$V\left(R_{X}\right)=\left\{ *\right\} $, $\overline{x}=x^{-1}$
and $\iota\left(x\right)=*.$ An \emph{$X$-labeled graph }is a graph
$\Gamma$ together with a graph morphism $l\colon\Gamma\to R_{X}$.
A morphism of $X$-labeled graphs $\Gamma$ and $\Delta$ is a graph
morphism $f\colon\Gamma\to\Delta$ that commutes with the label functions.
Let $\mathcal{P}\left(\Gamma\right)$ be the set of all the paths
in $\Gamma$, let $F_{X}$ be the free group on $X$ and let $P=\oton en$
be a path. The edge part of the label function $l^{E}\colon E\left(\Gamma\right)\to E\left(R_{X}\right)$
can be extended to a function $l\colon\mathcal{P}\left(\Gamma\right)\to F_{X}$
by the rule $l\left(P\right)=l\left(e_{1}\right)\ldots\left(e_{n}\right)$.

A \emph{pointed }$X$-labeled graph is an $X$-labeled graph that
has a distinguished vertex called the base point. A morphism of a
pointed labeled graph sends the base point to the base point. This
constitutes a category called $X\text{-Grph}$. For a pointed $X$-labeled
graph $\Gamma$ we define $\pi_{1}\left(\Gamma\right)$ to be 
\[
\pi_{1}(\Gamma)=\left\{ l\left(P\right)\in F_{X}\,|\,P\text{ is a closed path beginning at the base point}\right\} .
\]
\end{defn}

\begin{defn}
Let $F_{X}$ be the free group on the set $X$. We define a category
$\text{Sub}\left(F_{X}\right)$, whose objects are subgroups $H\leq F_{X}$
and there is a unique morphism in $\hom\left(H,K\right)$ iff $H\leq K$.
It is easy to verify that $\pi_{1}$ is a functor from $X\text{-Grph}$
to $\text{Sub}\left(F_{X}\right)$.
\end{defn}

Note: The functor $\pi_{1}$ defined above is not the fundamental
group of $\Gamma$ as a topological space. Rather, if one views $\Gamma$
and $R_{X}$ as topological spaces and $l$ as a continuous function,
then $\pi_{1}$ is the image of the fundamental group of $\Gamma$
in that of $R_{X}$ under the group homomorphism induced by $l$.
\begin{defn}[Folding]
A labeled graph $\Gamma$ is \emph{folded }if $l\left(e\right)\neq l\left(f\right)$
holds for every two edges $e,f\in E\left(\Gamma\right)$ with $\iota\left(f\right)=\iota\left(e\right)$.
We notice that there is at most one morphism between two pointed folded
labeled graphs. If $\Gamma$ is not folded, there exist $e,f\in E\left(\Gamma\right)$
s.t.\ $\iota\left(e\right)=\iota\left(f\right)$ and $l\left(e\right)=l\left(f\right)$;
Let $\Gamma'$ be the graph obtained by identifying the vertex $\iota\left(\overline{e}\right)$
with $\iota\left(\overline{f}\right)$, the edges $e$ with $f$ and
$\overline{e}$ with $\overline{f}$. We say $\Gamma'$ is the result
of \emph{folding} $e$ and $f$. The label function $l$ factors through
$\Gamma'$, yielding a label function $l'$ on $\Gamma'$, and we
notice that $\pi_{1}\left(\Gamma\right)=\pi_{1}\left(\Gamma'\right)$.
\end{defn}

\begin{defn}[Core graph]
A \emph{core graph} $\Gamma$ is a labeled, folded, pointed graph
s.t.\ every edge in $\Gamma$ belongs in a closed reduced path around
the base point. Is case of finite graphs this is equivalent to every
$v\in V\left(\Gamma\right)$ having $\deg(v):=|\iota^{-1}(v)|>1$
except the base point which can have any degree.
\end{defn}

\begin{defn}
Let $X\text{-CGrph}$ be the category of connected, pointed, folded,
$X$-labeled core graphs. Define a functor $\Gamma_{X}\colon\text{Sub}\left(F_{X}\right)\to X\text{-CGrph}$
that associates to the subgroup $H\leq F_{X}$ a graph $\Gamma_{X}\left(H\right)$
(which is unique up to a unique isomorphism) s.t.\ $\pi_{1}(\Gamma_{X}(H))=H$.
\end{defn}

\begin{fact}[\cite{MR695906,MR1882114}]
The functors $\pi_{1}$ and $\Gamma$ define an equivalence between
the categories $X\text{-CGrph}$ and $\text{Sub}\left(F_{X}\right)$.
\end{fact}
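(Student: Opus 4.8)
The plan is to exhibit $\pi_{1}$ and $\Gamma_{X}$ as mutually quasi-inverse functors. Since $\text{Sub}\left(F_{X}\right)$ is a poset category, this reduces to three checks: that $\pi_{1}$ is essentially surjective, that it is fully faithful, and that there is a natural isomorphism $\text{Id}_{X\text{-CGrph}}\cong\Gamma_{X}\circ\pi_{1}$. Two of these are almost immediate. Essential surjectivity (and one triangle identity on objects) holds because $\Gamma_{X}\left(H\right)$ is a core graph with $\pi_{1}\left(\Gamma_{X}\left(H\right)\right)=H$; concretely one folds the wedge of loops reading a generating set of $H$. Faithfulness is also free: the Folding definition already records that there is at most one morphism between pointed folded labeled graphs, while in $\text{Sub}\left(F_{X}\right)$ there is at most one morphism between any two objects, so both relevant hom-sets have at most one element. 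Everything therefore rests on a single lifting property.

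The key point, which I would isolate as a lemma, is the following: for core graphs $\Gamma$ and $\Delta$ there is a (necessarily unique) morphism $\Gamma\to\Delta$ in $X\text{-CGrph}$ if and only if $\pi_{1}\left(\Gamma\right)\subseteq\pi_{1}\left(\Delta\right)$. The forward direction is mere functoriality: a morphism $f\colon\Gamma\to\Delta$ preserves the base point, the edges, and the labels, so it carries any closed path at the base of $\Gamma$ to a closed path at the base of $\Delta$ with the same label, whence $\pi_{1}\left(\Gamma\right)\subseteq\pi_{1}\left(\Delta\right)$. For the converse I would construct $f$ by reading labels into $\Delta$. Because $\Delta$ is folded, every reduced word $w\in F_{X}$ determines at most one reduced path in $\Delta$ starting at the base point, giving a deterministic partial reading. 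For a vertex $v$ of $\Gamma$ I choose a reduced path $P$ from the base to $v$ and set $f\left(v\right)$ to be the endpoint in $\Delta$ of the reading of $l\left(P\right)$, and I define $f$ on edges in the same way.

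The step I expect to be the main obstacle is verifying that these readings never get stuck and do not depend on the chosen paths. Readability is exactly where the core-graph hypothesis is used: every edge of $\Gamma$, and hence every vertex, lies on a reduced closed path $C$ at the base point, so I may take the path $P$ to $v$ to be a prefix of such a $C$; then $l\left(P\right)$ is a prefix of $l\left(C\right)\in\pi_{1}\left(\Gamma\right)\subseteq\pi_{1}\left(\Delta\right)$, and since $l\left(C\right)$ reads as a genuine reduced closed path in the folded graph $\Delta$, the prefix $l\left(P\right)$ reads successfully too and the edge of $\Delta$ matching a given edge of $\Gamma$ appears along the way. Independence of the choice of $P$ follows from the containment: if $P,P'$ are two reduced paths from the base to $v$, then $P$ followed by the reverse of $P'$ is a closed path, so $l\left(P\right)l\left(P'\right)^{-1}\in\pi_{1}\left(\Gamma\right)\subseteq\pi_{1}\left(\Delta\right)$, and determinism of readings in the folded graph $\Delta$ then forces the two readings to end at the same vertex. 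Granting this, one checks routinely that $f$ commutes with $\iota$, with the involution, and with $l$, and fixes the base point, so $f$ is a morphism in $X\text{-CGrph}$; uniqueness is the folding remark.

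With the lemma in hand the equivalence assembles quickly. Its two directions give exactly fullness (a morphism exists precisely when the subgroups are nested), and together with the at-most-one-morphism remark this yields full faithfulness. For the last natural isomorphism, observe that $\pi_{1}\left(\Gamma\right)$ and $\pi_{1}\left(\Gamma_{X}\left(\pi_{1}\left(\Gamma\right)\right)\right)$ coincide, so the lemma supplies morphisms $\Gamma\to\Gamma_{X}\left(\pi_{1}\left(\Gamma\right)\right)$ and back; by uniqueness their two composites are the identities, so the map is an isomorphism, and naturality is automatic because every hom-set in sight is a singleton. Combined with essential surjectivity, this presents $\pi_{1}$ and $\Gamma_{X}$ as quasi-inverse, establishing the claimed equivalence.
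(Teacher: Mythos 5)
Your argument is correct, but it takes a genuinely different route from the one sketched in the paper. The paper proves this Fact by passing through covering space theory: it identifies $\text{Sub}\left(F_{X}\right)$ with the category of connected pointed covers of $R_{X}$ (using the existence of a universal cover), and then matches covers with core graphs by completing a core graph to a cover and, conversely, taking the core of a cover. You instead give a direct combinatorial proof whose engine is the lifting lemma: a (necessarily unique) morphism $\Gamma\to\Delta$ of pointed folded core graphs exists if and only if $\pi_{1}\left(\Gamma\right)\subseteq\pi_{1}\left(\Delta\right)$, proved by deterministically reading labels of paths of $\Gamma$ into the folded graph $\Delta$. Your treatment of the two delicate points is right: readability comes from the core condition (every edge lies on a reduced circuit whose label lies in $\pi_{1}\left(\Delta\right)$, and in a folded graph a reduced word in $\pi_{1}\left(\Delta\right)$ is traced by a reduced closed path), and well-definedness comes from the containment plus determinism --- though in that last step you should cancel the common suffix of $l\left(P\right)$ and $l\left(P'\right)$ explicitly before invoking determinism, since $l\left(P\right)l\left(P'\right)^{-1}$ need not be reduced as written. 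What each approach buys: the paper's route is shorter because it outsources the classification to classical covering space theory and handles arbitrary (including infinitely generated) subgroups uniformly, since the core of the cover corresponding to $H$ exists for free; your route is self-contained and makes the reading/lifting mechanism explicit, which is in fact the mechanism used implicitly throughout the rest of the paper, but your parenthetical construction of $\Gamma_{X}\left(H\right)$ by ``folding a wedge of loops'' needs a limit argument when $H$ is not finitely generated (the paper's definition of $\Gamma_{X}$ simply posits existence, which the covering-space picture supplies).
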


The correspondence between the categories of $X\text{-CGrph}$ and
$\text{Sub}\left(F_{X}\right)$ follows from the theory of cover spaces.
Let us sketch a proof. We regard $R_{X}$ as a topological space and
look at the category of connected pointed cover spaces of $R_{X}$.
This category is equivalent to $\text{Sub}\left(F_{X}\right)$, following
from the fact that $R_{X}$ has a universal cover. Let $\Gamma$ be
a connected folded $X$-labeled core graph, viewed as a topological
space and $l$ as a continuous function. There is a unique way up
to cover isomorphism to extend $\Gamma$ to a cover of $R_{X}$. There
is also a unique way to associate a core graph to a cover space of
$R_{X}$. This gives us an equivalence between the category of connected
pointed cover spaces of $R_{X}$ and pointed connected folded $X$-labeled
core graphs.
\begin{defn}
By uniqueness of the core graph of a subgroup we can define a functor
$\core\colon\text{\ensuremath{X}-Grph}\to X\text{-CGrph}$ that associates
to a graph $\Gamma$ a core graph s.t.\ $\pi_{1}\left(\core\left(\Gamma\right)\right)=\pi_{1}\left(\Gamma\right)$.
\end{defn}

\begin{defn}
Let $\Gamma$ be a graph with a vertex $v$ of degree one which is
not the base-point. For $e=\iota^{-1}\left(v\right)$, let $\Gamma'$
be the graph with $V\left(\Gamma'\right)=V\left(\Gamma\right)\backslash\left\{ v\right\} $
and $E\left(\Gamma'\right)=E\left(\Gamma\right)\backslash\left\{ e,\overline{e}\right\} $.
We say that $\Gamma'$ is the result of \emph{trimming} $e$ from
$\Gamma$, and we notice that $\pi_{1}\left(\Gamma\right)=\pi_{1}\left(\Gamma'\right)$.
\end{defn}

\begin{rem}
For a finite graph $\Gamma$, after both trimming and folding $\left|E\left(\Gamma'\right)\right|<\left|E\left(\Gamma\right)\right|$.
If no foldings or trimmings are possible then $\Gamma$ is a core
graph. This means that after preforming a finite amount of trimmings
and foldings we arrive at $\core\left(\Gamma\right)$. It follows
from the uniqueness of $\core\left(\Gamma\right)$ that the order
in which one performs the trimmings and foldings does not matter.
\end{rem}

\begin{defn}[Whitehead graph]
A \emph{2-path} in a graph $\Gamma$ is a pair $\left(e,f\right)\in E\left(\Gamma\right)\times E\left(\Gamma\right)$
with $\iota\left(f\right)=\iota\left(\overline{e}\right)$ and $f\neq\overline{e}$.
If $\Gamma$ is $X$-labeled, the set 
\[
W\left(\Gamma\right)=\left\{ \left\{ l\left(e\right),l\left(\overline{f}\right)\right\} \mid\left(e,f\right)\text{ is a \ensuremath{2}-path in \ensuremath{\Gamma}}\right\} 
\]
forms the set of edges of a combinatorial (undirected) graph whose
vertices are $X\cup X^{-1}$, called the \emph{Whitehead graph }of
$\Gamma$. If $w\in F_{X}$ is a cyclically reduced word, the Whitehead
graph of $w$ as defined in \cite{MR1575455,MR1714852} and the Whitehead
graph of $\Gamma\left(\left\langle w\right\rangle \right)$ defined
here coincide. Let $W_{X}=W\left(R_{X}\right)$ be the set of edges
of the Whitehead graph of $R_{X}$, which we call the \emph{full Whitehead
graph}. Let $x,y\in X\cup X^{-1}$ and let $\left\{ x,y\right\} \in W_{X}$
be an edge. We denote $x.y=\left\{ x,y\right\} $ (this is similar
to the notation in \cite{MR1812024}).
\end{defn}

\begin{defn}
A homomorphism $\varphi\colon F_{Y}\to F_{X}$ is \emph{non-degenerate}
if $\varphi\left(y\right)\neq1$ for every $y\in Y$.
\end{defn}

\begin{defn}
Let $w\in F_{X}$ be a reduced word of length $n$. Define $\Gamma_{w}$
to be the $X$-labeled graph with $V\left(\Gamma_{w}\right)=\left\{ 1,\dots,n+1\right\} $
forming a path $P$ labeled by $l\left(P\right)=w$. Notice that $\Gamma_{w}\cong\Gamma_{w^{-1}}$.
\end{defn}

\begin{defn}
Let $\varphi\colon F_{Y}\to F_{X}$ be a non-degenerate homomorphism.
We define a functor $\mathcal{F}_{\varphi}$ from $Y$-labeled graphs
to $X$-labeled graphs by sending $y$-labeled edges to $\varphi\left(y\right)$-labeled
paths. Formally, let $\Delta$ be a $Y$-labeled graph and let $E_{0}=\left\{ e\in E\left(\Delta\right)|l\left(e\right)\in Y\right\} $
be an orientation of $\Delta$, namely, $E\left(\Delta\right)=E_{0}\sqcup\{\overline{e}\,|\,e\in E_{0}\}$.
For every $e\in E\left(\Delta\right)$ let $n_{e}\in\mathbb{N}$ be
the length of the word $\varphi\left(l\left(e\right)\right)\in F_{X}$
plus one. We consider $V\left(\Delta\right)$ as a graph without edges,
take the disjoint union of graphs $\bigsqcup_{e\in E_{0}}\Gamma_{\varphi\left(l\left(e\right)\right)}\sqcup V\left(\Delta\right)$
and for every $e\in E_{0}$ glue $1\in V\left(\Gamma_{\varphi\left(l\left(e\right)\right)}\right)$\textcolor{red}{{}
}to $\iota\left(e\right)\in V\left(\Delta\right)$, and $n_{e}\in V\left(\Gamma_{\varphi\left(l\left(e\right)\right)}\right)$
to $\iota\left(\overline{e}\right)\in V\left(\Delta\right)$. Let
$\Delta$ and $\Xi$ be $Y$-labeled graphs, and $f\colon\Delta\to\Xi$
a graph morphism. As for functionality, if $f\colon\Delta\to\Xi$
is a morphism of $Y$-labeled graphs, $\mathcal{F}_{\varphi}f$ is
defined as follows: every edge in $\mathcal{F}_{\varphi}\left(\Delta\right)$
belongs to a path $\mathcal{F}_{\varphi}\left(e\right)$ for some
$e\in E(\Delta)$, and we define $\left(\mathcal{F}_{\varphi}f\right)\left(\mathcal{F}_{\varphi}\left(e\right)\right)=\mathcal{F}_{\varphi}\left(f\left(e\right)\right)$.
\end{defn}

\begin{rem}
\label{rem:Let--we}For $H\leq F_{Y}$ we notice that $\core(\mathcal{F}_{\varphi}\Gamma_{Y}(H))=\Gamma_{X}\left(\varphi\left(H\right)\right)$.
\end{rem}

\begin{defn}[Stencil]
Let $\Gamma$ be an $Y$-labeled graph, and $\varphi\colon F_{Y}\to F_{X}$
a non-degenerate homomorphism. We say that the pair $\left(\varphi,\Gamma\right)$
is a \textit{stencil} iff $\mathcal{F}_{\varphi}\left(\Gamma\right)$
is a folded graph. Notice that if $\Gamma$ is not folded, then $\mathcal{F}_{\varphi}\left(\Gamma\right)$
is not folded for any $\varphi$.
\end{defn}

\begin{defn}
Let $\Gamma$ be a $Y$-labeled folded graph. An FGR object $(Y,N_{Y})$
is said to be a \textit{stencil space}\emph{ of }$\Gamma$ if $W\left(\Gamma\right)\subseteq N_{Y}$.
The reason for the name is that for any object $\left(X,N_{X}\right)$
and morphism $\varphi\in\hom\left((Y,N_{Y}),\left(X,N_{X}\right)\right)$,
the pair $\left(\varphi,\Gamma\right)$ is a stencil.
\end{defn}

\begin{defn}
Let $\tau\colon F_{X}\backslash\left\{ 1\right\} \to X\cup X^{-1}$
be the function returning the last letter of a reduced word. For reduced
words $u,v$ in a free group, we write $u\cdot v$ to indicate that
there is no cancellation in their concatenation, namely $\tau(u)\neq\tau(v^{-1})$.
\end{defn}

\begin{defn}[FGR]
The objects of the category \emph{Free Groups with Restrictions}
($\mathbf{FGR}$) are pairs $\left(X,N\right)$ where $X$ is a set
of ``generators'' and $N\subseteq W_{X}$ a set of ``restrictions''.
A morphism $\varphi\in\hom_{\mathbf{FGR}}\left(\left(X,N\right),\left(Y,M\right)\right)$
is a group homomorphism $\varphi\colon F_{Y}\rightarrow F_{X}$ with
the following properties:
\begin{enumerate}
\item[(i)]  For every $x\in Y$, $\varphi\left(x\right)\neq1$ ($\varphi$ is
non-degenerate).
\item[(ii)] For every $x\in Y$, $W(\Gamma_{\varphi\left(x\right)})\subseteq M$.
\item[(iii)] For every $x.y\in N$, $\varphi(x)\cdot\varphi(y)^{-1}$ (i.e.\ $\tau\left(\varphi\left(x\right)\right)\neq\tau\left(\varphi\left(y\right)\right)$).
\item[(iv)] For every $x.y\in N$, $\tau\left(\varphi\left(x\right)\right).\tau\left(\varphi\left(y\right)\right)\in M$.\footnote{Technically, (iv) implies (iii), as $M\subseteq W_{Y}$ and $x.x\notin W_{Y}$.}
\end{enumerate}
\end{defn}

\section{There is always a base where the graph morphism is onto}

All lemmas and propositions used here can be found in the section
titled The core functor in \cite{KOLODNER2020}.
\begin{prop}
\label{prop:Let--a}Let $H\leq F_{Y}$ a subgroup let $w\in F_{Y}$
be a word. We can obtain the core graph $\Gamma_{Y}\left(wHw^{-1}\right)$
from the graph $\Gamma_{Y}\left(H\right)$ by the following process: 
\begin{enumerate}
\item Attach a reduced path labeled $w$ to the base point in $\Gamma\left(H\right)$
\item Set the new base point to be at the beginning of the path labeled
by $w$. 
\item Fold and trim if necessary
\end{enumerate}
\end{prop}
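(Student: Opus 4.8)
The plan is to realize $wHw^{-1}$ directly as the image $\pi_{1}$ of the (not-yet-core) graph produced by steps (1)--(2), and then to invoke uniqueness of the core graph to identify its core with $\Gamma_{Y}\left(wHw^{-1}\right)$; step (3) is then nothing more than the computation of $\core$, which we already know is carried out by folding and trimming. Concretely, I would first fix the base point $p$ of $\Gamma_{Y}\left(H\right)$ and form the graph $\Gamma$ by attaching to $p$ a reduced path $P=\left(\oton fk\right)$ on $k=\left|w\right|$ fresh edges and $k-1$ fresh internal vertices, running from a new vertex $q$ to $p$ and carrying the label $l\left(P\right)=w$ when read from $q$. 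The new base point is placed at $q$, the beginning of $P$, exactly as prescribed by step (2).

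Next I would compute the two relevant fundamental groups. Viewing $\Gamma$ with base point $p$, the vertex $q$ has degree one and is not the base point, while each internal vertex of $P$ has degree two; hence trimming from $q$ peels off $P$ one edge at a time and returns $\Gamma_{Y}\left(H\right)$. Since trimming preserves $\pi_{1}$ this gives $\pi_{1}\left(\Gamma,p\right)=\pi_{1}\left(\Gamma_{Y}\left(H\right),p\right)=H$. Now I keep the graph $\Gamma$ fixed and only move the base point to $q$. As $P$ is a path from $q$ to $p$ with $l\left(P\right)=w$, the standard change-of-base-point argument applies: conjugating a closed path $C$ at $p$ to $P\,C\,\overline{P}$ shows $w\,\pi_{1}\left(\Gamma,p\right)w^{-1}\subseteq\pi_{1}\left(\Gamma,q\right)$, and conjugating a closed path $C'$ at $q$ to $\overline{P}\,C'\,P$ gives the reverse inclusion. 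Thus $\pi_{1}\left(\Gamma,q\right)=w\,H\,w^{-1}$.

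Finally I would close the argument through the core functor. Since $\pi_{1}\left(\Gamma,q\right)=wHw^{-1}$ and $\core$ produces the unique core graph with the same $\pi_{1}$, we obtain $\core\left(\Gamma,q\right)=\Gamma_{Y}\left(wHw^{-1}\right)$. By the remark on trimming and folding, $\core\left(\Gamma,q\right)$ is reached from $\Gamma$ by a sequence of foldings and trimmings (finite in the finitely generated case), and this is precisely step (3). Hence the three-step process outputs $\Gamma_{Y}\left(wHw^{-1}\right)$, as claimed.

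I expect the only genuinely delicate point to be the bookkeeping in the change of base point: one must verify that the conjugating element is $w$ rather than $w^{-1}$ (this is exactly why the new base point is placed at the \emph{beginning} of $P$), and that intermediate returns of a closed path to the degree-one vertex $q$ contribute nothing after reduction. Everything else --- the invariance of $\pi_{1}$ under attaching the hair $P$, and the identification of ``fold and trim'' with $\core$ --- is immediate from the definitions and the lemmas from the section on the core functor in \cite{KOLODNER2020}.
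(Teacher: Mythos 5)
Your proof is correct: the change-of-base-point computation $\pi_{1}\left(\Gamma,q\right)=w\,\pi_{1}\left(\Gamma,p\right)w^{-1}=wHw^{-1}$, followed by uniqueness of the core graph and the identification of $\core$ with folding and trimming, is exactly the standard argument for this proposition. The paper itself states the result without proof, deferring to the section on the core functor in \cite{KOLODNER2020}; your argument is the one that reference supplies, and you correctly handle the one delicate point, namely orienting the attached path so that the conjugator is $w$ rather than $w^{-1}$.
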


\begin{thm}
\label{thm:ontoallbase}Let $F_{Y}$ be a free group with a finite
set of generators $Y$ and let $H\leq K\leq F_{Y}$ be finitely generated
subgroups. Then there is a basis $B$ of $F_{Y}$ s.t. the morphism
of Stallings graphs $\Gamma_{B}\left(H\right)\to\Gamma_{B}\left(K\right)$
is onto. Moreover this basis can be obtained by conjugation.
\end{thm}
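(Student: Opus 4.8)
The plan is to prove Theorem \ref{thm:ontoallbase} by conjugating the pair $H\le K$ by a single element $w\in K$ chosen so that $w$ already traverses every edge of $\Gamma_{Y}(K)$. Passing to the conjugate generating set $B=w^{-1}Yw$, which is a basis of $F_{Y}$ obtained, as promised, by conjugation, is under the dictionary of Proposition \ref{prop:Let--a} the same as replacing $H\le K$ by $wHw^{-1}\le wKw^{-1}$ and reading the Stallings graphs in the original labels. First I would record the elementary reformulation that $\Gamma_{B}(H)\to\Gamma_{B}(K)$ is onto if and only if every geometric edge of $\Gamma_{B}(K)$ is traversed by the image of some reduced loop of $H$; this turns the problem into one of \emph{covering} all edges of a fixed finite core graph.

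The heart of the argument is the choice of $w$. I would fix the finite connected core graph $\Gamma_{Y}(K)$ with its base point and produce a reduced closed path $w$ at the base point that traverses every edge; being a reduced closed path, it represents an element $w\in K$. Conjugating by this $w$ has two effects, both read off from Proposition \ref{prop:Let--a}. Since $w\in K$, attaching a $w$-labelled path to $\Gamma_{Y}(K)$ and re-basing folds and trims back to $\Gamma_{Y}(K)$, so $\Gamma(wKw^{-1})\cong\Gamma_{Y}(K)$ is unchanged up to the position of the base point. On the $H$ side, since $w\notin H$ in general, attaching the $w$-labelled path to the base point of $\Gamma_{Y}(H)$ and re-basing at the start of that path yields a graph carrying a distinguished path spelling $w$; under the morphism to $\Gamma(wKw^{-1})=\Gamma_{Y}(K)$ this path maps to the reduced realisation of $w$, that is, to the edge-covering loop we began with. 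Hence the image contains every edge and the morphism is onto, which is exactly the assertion of Theorem \ref{thm:ontoallbase} after translating back through $B=w^{-1}Yw$.

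The step I expect to be the main obstacle is the construction of a \emph{reduced} closed path through all edges: naive concatenation of loops backtracks at the base point, and reducing it may cancel precisely the edges one wants to keep. I would handle this by taking a spanning tree $T$ of $\Gamma_{Y}(K)$, forming for each non-tree edge its associated basis loop $\ell_{i}$, and noting that the core property forces every edge of $\Gamma_{Y}(K)$ to appear in the reduced spelling of some $\ell_{i}$; I would then assemble the $\ell_{i}$ into one reduced loop, inserting short protected detours (or passing to suitable powers) so that no junction cancellation erases a targeted edge. The second technical point is to check that the folding and trimming of Proposition \ref{prop:Let--a} do not destroy coverage: the distinguished $w$-path joins the new base point to the old $\core$ of $\Gamma_{Y}(H)$, hence lies on reduced loops and is never trimmed, while folding can only identify edges whose images already spell $w$. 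Both points are book-keeping with the core-functor lemmas of \cite{KOLODNER2020}, and that is where I would concentrate the rigour.
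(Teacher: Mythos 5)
Your overall strategy is the same as the paper's: choose $w\in K$ labelling a reduced circuit at the base point of $\Gamma_{Y}(K)$ that traverses every edge, conjugate by $w$, observe that $\Gamma_{Y}(wKw^{-1})=\Gamma_{Y}(K)$ while the attached $w$-labelled path in the pre-core picture of $\Gamma_{Y}(wHw^{-1})$ maps onto the covering circuit. The first technical point you flag (producing the reduced edge-covering circuit) is fine; the paper simply asserts its existence and your spanning-tree sketch is a reasonable way to supply it.

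The genuine gap is in your second technical point. Your claim that ``the distinguished $w$-path joins the new base point to the old core of $\Gamma_{Y}(H)$, hence lies on reduced loops and is never trimmed'' is false when the base point of $\Gamma_{Y}(H)$ has degree one. Concretely, take $H=\left\langle aba^{-1}\right\rangle \leq K=F_{\{a,b\}}$ and $w=ba^{-1}$, which is a reduced circuit in $\Gamma_{Y}(K)=R_{\{a,b\}}$ traversing both geometric edges. Folding absorbs the terminal $a^{-1}$ of the attached path into the tail of $\Gamma_{Y}(H)$, the old base point becomes a non-base vertex of degree one, and trimming deletes the $a$-edge: indeed $wHw^{-1}=\left\langle b\right\rangle$, and $\Gamma_{Y}(\left\langle b\right\rangle)\to\Gamma_{Y}(K)$ misses the $a$-edge entirely. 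So surjectivity really can be destroyed, and the statement ``folding can only identify edges'' does not save you, because the problem is the subsequent trimming, not the folding. The paper closes this hole by first conjugating (a preliminary, harmless change of base point) so that the base point of $\Gamma_{Y}(H)$ has degree at least two; then, since $w$ is reduced, an auxiliary lemma guarantees that $\core(\Gamma')$ is reached by folding alone, and surjectivity survives folding. Your proof needs this preliminary normalization (or some equivalent condition tying the last letter of $w$ to the tail of $\Gamma_{Y}(H)$) before the rest of the argument goes through.
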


\begin{proof}
We will prove the equivalent statement. There is an automorphism $\varphi\in\Inn\left(F_{Y}\right)$
s.t. $\Gamma_{Y}\left(\varphi\left(H\right)\right)\to\Gamma_{Y}\left(\varphi\left(K\right)\right)$
is onto. Without loss of generality we can assume that the base point
of $\Gamma_{Y}\left(H\right)$ has degree greater or equal to two
(if not this can be corrected by conjugation). Let $u\in F_{Y}$ be
the label of a reduced circuit in $\Gamma_{Y}\left(K\right)$ based
at the base point that traverses each edge of $\Gamma_{Y}\left(H\right)$
at least once (we are not bothered if it traverses some edges multiple
times). Let $\varphi\in\text{Inn}\left(F_{Y}\right)$ be conjugation
by $u$. By construction $u\in K$ therefore $\core\mathcal{F}_{\varphi}\left(\Gamma_{Y}\left(K\right)\right)=\Gamma_{Y}\left(\varphi\left(K\right)\right)=\Gamma_{Y}\left(uKu^{-1}\right)=\Gamma_{Y}\left(K\right)$.
We construct $\Gamma_{Y}\left(uHu^{-1}\right)$ by the process described
in proposition \ref{prop:Let--a} but stop at stage 2 (before preforming
folding and trimming), we denote this graph by $\Gamma'.$ By construction
the graph morphism $\Gamma'\to\Gamma_{Y}\left(K\right)$ is onto.
Because $\Gamma_{Y}\left(H\right)$ is folded and its base point has
degree at least 2 the graph $\Gamma'$ satisfies the conditions of
lemma 2.1 from \cite{MR3211804} this means that $\core\left(\Gamma'\right)$
is obtained without trimming. Without trimming the morphism remains
onto when one takes its core ( remark $3.16$ from \cite{KOLODNER2020}).
Thus we get that $\core\left(\Gamma'\to\Gamma_{Y}\left(K\right)\right)$
is onto but $\core\left(\Gamma'\to\Gamma_{Y}\left(K\right)\right)=\Gamma_{Y}\left(\varphi\left(H\right)\right)\to\Gamma_{Y}\left(\varphi\left(K\right)\right).$
\end{proof}
\begin{cor}
Let $H<K\leq F_{Y}$ ($H$ strictly contained in $K)$. There is a
basis $B$ of $F_{Y}$ s.t. the graph morphism $\Gamma_{B}\left(H\right)\to\Gamma_{B}\left(K\right)$
is not injective. 
\end{cor}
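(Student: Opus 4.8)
The plan is to reuse the very basis produced by Theorem \ref{thm:ontoallbase} and then argue by contradiction, so no new construction is needed. By that theorem there is a basis $B$ of $F_{Y}$ (obtained by conjugation) for which the morphism of Stallings graphs $f\colon\Gamma_{B}\left(H\right)\to\Gamma_{B}\left(K\right)$ is onto; I claim this same $B$ already witnesses non-injectivity. Equivalently, in the language of the proof of Theorem \ref{thm:ontoallbase}, there is an inner automorphism $\varphi$ of $F_{Y}$ with $\Gamma_{Y}\left(\varphi\left(H\right)\right)\to\Gamma_{Y}\left(\varphi\left(K\right)\right)$ onto, and I will show this map cannot be injective.

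First I would record that the strict inclusion survives the change of basis: passing to $B$ amounts to applying the inner automorphism $\varphi$, and since $\varphi$ is an automorphism of $F_{Y}$ it carries the strict inclusion $H<K$ to a strict inclusion $\varphi\left(H\right)<\varphi\left(K\right)$. In particular the two subgroups are \emph{distinct}, so the objects $\Gamma_{B}\left(H\right)$ and $\Gamma_{B}\left(K\right)$ of $Y\text{-CGrph}$ are not isomorphic: by the equivalence of categories between $Y\text{-CGrph}$ and $\text{Sub}\left(F_{Y}\right)$, any isomorphism of pointed folded labeled core graphs would induce an isomorphism in the poset category $\text{Sub}\left(F_{Y}\right)$, whose only isomorphisms are identities, forcing $\pi_{1}\left(\Gamma_{B}\left(H\right)\right)=\pi_{1}\left(\Gamma_{B}\left(K\right)\right)$, a contradiction.

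Now suppose for contradiction that $f$ were injective. A graph morphism that is simultaneously injective and onto is bijective on both vertices and edges, and its set-theoretic inverse automatically commutes with the initial-vertex map, the involution, and the label map; hence the inverse is again a morphism of labeled graphs and $f$ is an isomorphism. This contradicts the previous paragraph, so $f$ is not injective. Since $f$ is precisely the morphism $\Gamma_{B}\left(H\right)\to\Gamma_{B}\left(K\right)$ (and it is the unique such morphism, as both graphs are folded), the corollary follows.

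The argument is essentially logical once Theorem \ref{thm:ontoallbase} is available, so I do not expect a serious obstacle; the content has already been front-loaded into the theorem. The one step that deserves a careful word is ``bijective morphism of core graphs $\Rightarrow$ isomorphism $\Rightarrow$ equal subgroups,'' which is exactly where the equivalence of $Y\text{-CGrph}$ with the poset $\text{Sub}\left(F_{Y}\right)$ is invoked; and I would make sure the finite-generation hypothesis on $H$ and $K$ is in force, since it is what allows Theorem \ref{thm:ontoallbase} to be applied in the first place.
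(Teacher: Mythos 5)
Your proposal is correct and follows exactly the paper's argument: apply Theorem \ref{thm:ontoallbase} to get a basis where the morphism is onto, then observe that since $H<K$ strictly the morphism cannot be an isomorphism, hence cannot also be injective. You simply spell out in more detail the steps (strictness is preserved under the inner automorphism; a bijective labeled-graph morphism is an isomorphism; distinct subgroups have non-isomorphic core graphs) that the paper leaves implicit.
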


\begin{proof}
There exists a basis $B$ s.t. $\Gamma_{B}\left(H\right)\to\Gamma_{B}\left(K\right)$
is onto. Because $H$ is strictly contained in $K$ the graph morphism
$\Gamma_{B}\left(H\right)\to\Gamma_{B}\left(K\right)$ cannot be an
isomorphism therefore it is not injective. 
\end{proof}

\section{Example}

In the category of pointed labeled core graphs we can't have a free
group extension s.t. for every automorphism the graph morphism is
injective. We saw that the obstruction was conjugation we will show
that this is the only obstruction. For this we focus our attention
on labeled graphs without a base point. Miasnikov and Kapovich \cite{MR1882114}
called this `` the type of a graph''. To get the type of a graph
we forget the base point and trim again. We can look at the category
of core $X$-labeled graphs without base points. Let $\Gamma,\Delta$
be $X$-labeled core graphs. There is a graph morphism $\Gamma\to\Delta$
iff there exists a $u\in F_{X}$ s.t. $u\pi_{1}\left(\Gamma\right)u^{-1}<\pi_{1}\left(\Delta\right)$
(technically one has to choose a base point for $\pi_{1}$ to be well
defined. But it is well defined up to conjugation which is what we
are using here). We notice that without a base point there is no longer
a unique graph morphism between two graphs. Because we have an action
of $\text{Out}\left(F_{X}\right)$ on the set of subgroups of $F_{X}$
up to conjugation this gives us an action of $\text{Out}\left(F_{X}\right)$
on morphisms $\Gamma\to\Delta$ in the category of labeled graphs.
In this setting there is a graph morphism $\Gamma\to\Delta$ that
is injective in its whole orbit under outer automorphism. We give
the example of $\left\langle b\right\rangle <\left\langle b,aba^{-1}\right\rangle $.
We will use the tools developed in \cite{KOLODNER2020}. 
\begin{defn}
We define a functor 
\[
\text{Trimf}:PLCGraphs\to LCGraphs
\]
 from pointed labeled core graphs to labeled core graphs. The functor
forgets the base point and trims the ``tail'' (trimf stands for
forget then trim). It takes graph morphisms to their restrictions.
This definition is indeed a legal functor: Let $\Gamma\to\Delta$
be a morphism of pointed labeled core graphs. Let $v\in V\left(\Delta\right)$
be the base point of $\Delta$ and suppose it is of valency one (otherwise
no trimming occurs and the definition is clearly legal). Let $w\in V\left(\Gamma\right)$
be the inverse image of $v$ , it is the base point of $\Gamma$.
Since $\Gamma$ is folded the morphism $\Gamma\to\Delta$ is locally
injective therefor $w$ must also be of valency one. Let $\Gamma',\Delta'$
be the graphs obtained by trimming the edges incident to $v$ and
$w$ respectively. We see that the morphism $\Gamma'\to\Delta'$ obtained
by restricting $\Gamma\to\Delta$ to $\Gamma'$ is well defined. By
induction we can trim the whole ``tails'' of $\Gamma$ and $\Delta$.
(Trimf would not be defined if we include graphs that aren't folded).
\end{defn}

We denote $\Gamma=\Gamma_{\left\{ a,b\right\} }\left(\left\langle b\right\rangle \right)$
and $\Delta=\Gamma_{\left\{ a,b\right\} }\left(\left\langle b,aba^{-1}\right\rangle \right)$
and let $X$ be a countably infinite set. 
\begin{thm}
\label{thm:All-the-morphism}All the morphism in set $\left\{ \text{Trimf}\circ\core\circ\mathcal{F}_{\varphi}\left(\Gamma\to\Delta\right)|\varphi\in\hom\left(\left(\left\{ a,b\right\} ,\emptyset\right),\left(X,W_{X}\right)\right)\right\} $
are injective. 
\end{thm}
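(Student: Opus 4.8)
The plan is to unwind the three functors, reduce to a concrete statement about a cyclic group and one of its conjugates, and then read injectivity off the folding of a graph built from two cyclically reduced cycles joined by a bridge. Write $\beta=\varphi(b)$ and $\alpha=\varphi(a)$. Since the source restriction is $\emptyset$ and the target restriction is the full Whitehead graph $W_{X}$, conditions (ii)--(iv) of an $\mathbf{FGR}$ morphism are vacuous, so $\varphi$ ranges over \emph{all} non-degenerate homomorphisms, i.e.\ $\alpha,\beta\neq1$ are arbitrary reduced words. By Remark~\ref{rem:Let--we}, $\core\mathcal{F}_{\varphi}(\Gamma)=\Gamma_{X}(\langle\beta\rangle)$ and $\core\mathcal{F}_{\varphi}(\Delta)=\Gamma_{X}(\langle\beta,\alpha\beta\alpha^{-1}\rangle)$, and by functoriality the morphism in question is $\text{Trimf}$ applied to the inclusion-induced map $\Gamma_{X}(\langle\beta\rangle)\to\Gamma_{X}(\langle\beta,\alpha\beta\alpha^{-1}\rangle)$. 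As $\text{Trimf}$ depends only on the conjugacy class of $\pi_{1}$, I would conjugate by $p^{-1}$, where $\beta=pcp^{-1}$ with $c$ cyclically reduced; this replaces the map by the one induced by $\langle c\rangle\leq H$ with $H=\langle c,rcr^{-1}\rangle$ and $r=p^{-1}\alpha p$, and identifies $\text{Trimf}(\Gamma_{X}(\langle\beta\rangle))$ with the single cycle $S$ reading $c$. The theorem thus becomes: the image of $S$ in $\text{Trimf}(\Gamma_{X}(H))$ is an embedded cycle.

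Next I would describe the target. If $r$ centralizes $c$ (equivalently $r$ is a power of the primitive root $c_{0}$ of $c$), then $rcr^{-1}=c$, $H=\langle c\rangle$, and the map is an isomorphism, hence injective. Otherwise $c$ and $rcr^{-1}$ do not commute, so $\langle c,rcr^{-1}\rangle$ is non-cyclic and therefore free of rank $2$, and I would construct $\Gamma_{X}(H)$ by folding the graph consisting of two cycles $C_{1},C_{2}$ reading $c$ joined by a bridge reading $r$ (the pre-folded $\mathcal{F}_{\varphi}$-image of $\Delta$ after normalization). The inclusion sends $S$ onto $C_{1}$, so injectivity is equivalent to $C_{1}$ remaining embedded after folding, i.e.\ to reading $c$ being a simple closed path in $\Gamma_{X}(H)$.

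The heart of the argument is showing $C_{1}$ embeds. A cycle reading the cyclically reduced word $c$ is folded and embedded on its own, so $C_{1}$ can degenerate only through the folding of the bridge into the two cycles. I would track this: from each end the bridge is absorbed along the maximal prefix (resp.\ suffix) of $r$ that matches the biinfinite periodic word $\cdots c\,c\,c\cdots$, and every such step merges a \emph{bridge} vertex into a \emph{cycle} vertex --- never two distinct $C_{1}$-vertices, because at each vertex of $C_{1}$ the only labels present are the two cyclic continuations of $c$, and a reduced $r$ cannot immediately backtrack. The unabsorbed middle of $r$ then joins a vertex of $C_{1}$ to a vertex of $C_{2}$ by fresh edges. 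Two failures must be excluded. A periodic collapse (the $c$-path wrapping $k\geq2$ times) would make $c$ periodic of period $|c_{0}|$, forcing $c_{0}\in H=\langle c_{0}^{k},rc_{0}^{k}r^{-1}\rangle$; but in the non-degenerate case $r\notin\langle c_{0}\rangle$, so $\langle c_{0},r\rangle$ is free of rank $2$ on $c_{0},r$, and the homomorphism $c_{0}\mapsto1,\ r\mapsto0$ into $\mathbb{Z}$ sends $H$ into $k\mathbb{Z}$, whence $c_{0}\notin H$ --- a contradiction. A total collapse $C_{1}=C_{2}$ would likewise force $r\in\langle c_{0}\rangle$, again degenerate. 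Ruling these out leaves $C_{1}$ embedded and the morphism injective.

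I expect the main obstacle to be making the folding bookkeeping of the previous paragraph rigorous when $r$ wraps many times around $c$: there the amount of folding is unbounded and the pair $(\varphi,\Delta)$ has no finite stencil, so the basic stencil lemmas of \cite{KOLODNER2020} do not apply directly and one must invoke the extended, non-stencil-finite machinery developed in this paper. Concretely, I would isolate and prove by induction on the folding sequence a lemma to the effect that absorbing a reduced bridge into a cyclically reduced cycle identifies only bridge vertices with cycle vertices, never two cycle vertices, unless the bridge is a power of the root; the whole theorem then follows by combining this lemma with the rank and abelianization computations above.
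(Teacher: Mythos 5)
Your reduction is correct and is a genuinely different route from the paper: since $N=\emptyset$ makes conditions (ii)--(iv) vacuous, $\varphi$ is an arbitrary non-degenerate homomorphism, and after applying Remark \ref{rem:Let--we} and conjugating away the common prefix, the theorem does become the statement that the cycle reading the cyclically reduced word $c$ embeds in $\Gamma_{X}\left(\left\langle c,rcr^{-1}\right\rangle \right)$ (equivalently, that $p_{i}p_{j}^{-1}\notin\left\langle c,rcr^{-1}\right\rangle $ for distinct proper prefixes $p_{i},p_{j}$ of $c$). The paper instead never leaves the FGR formalism: it changes coordinates via $\sigma(\alpha)=b$, $\sigma(\beta)=aba^{-1}$, splits into the four cases of the first table, and then, because case 3 splits indefinitely (no stencil finiteness), introduces the auxiliary cases $x$ and $x'$ which close up into a finite diagram of stencil cases, each checked to be injective. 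Your approach, if completed, would be more conceptual and would bypass the tables entirely.

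The gap is in the folding bookkeeping, and it sits exactly where the paper has to work hardest. Your key lemma --- absorbing a reduced bridge into a cyclically reduced cycle only merges bridge vertices into cycle vertices --- treats the absorption of a prefix of $r$ into $C_{1}$ and of a suffix into $C_{2}$ \emph{separately}. It says nothing about the configuration in which the two absorption fronts meet or overlap: the bridge is entirely consumed, a vertex of $C_{2}$ lands on $C_{1}$, and the folding then continues between edges of $C_{1}$ and edges of $C_{2}$ along matching arcs of the bi-infinite periodic word $\cdots ccc\cdots$ (take $c=xyxy^{-1}$, $r=xy$: the bridge is fully absorbed into $C_{1}$ and $C_{2}$ then partially folds onto $C_{1}$). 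In that regime the elementary folds identify $C_{2}$-vertices with $C_{1}$-vertices, and one must prove that no $C_{2}$- or bridge-vertex is ever forced to equal two \emph{different} $C_{1}$-vertices; this is precisely the unbounded-overlap phenomenon behind the paper's case 3 and its auxiliary cases $x$, $x'$, and you explicitly defer it as ``the main obstacle'' rather than resolving it. Two further points would need repair even granting the lemma: (a) ``periodic collapse'' and ``total collapse'' are not an exhaustive list of failure modes --- a single identification $v_{i}=v_{j}$ would make the image of $C_{1}$ a tadpole-shaped reduced closed path, which is neither a power of a shorter cycle nor $C_{1}=C_{2}$; and (b) the abelianization $c_{0}\mapsto1$, $r\mapsto0$ is only defined on $\left\langle c_{0},r\right\rangle $ and only excludes $c_{0}\in H$, whereas the element that must be excluded from $H$ in general is $p_{i}p_{j}^{-1}$, a prefix-conjugate of a proper subword of $c$, which need not lie in $\left\langle c_{0},r\right\rangle $ at all. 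As it stands the proposal is a plausible programme whose central claim is asserted, not proved.
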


Theorem \label{thm:All-the-morphism-1} shows that for every morphism
$\Gamma\to\Delta$ in the orbit under $\text{Out}\left(F_{\left\{ a,b\right\} }\right)$
is injective: Let $u\in F_{X}$ and let $\varphi_{1},\varphi_{2}:F_{\left\{ a,b\right\} }\to F_{X}$
non-degenerate homomorphisms s.t. $u\varphi_{1}u^{-1}=\varphi_{2}$
then clearly $\text{Trimf}\circ\core\circ\mathcal{F}_{\varphi_{1}}\left(\Gamma\to\Delta\right)=\text{Trimf}\circ\core\circ\mathcal{F}_{\varphi_{2}}\left(\Gamma\to\Delta\right)$.
Without loss of generality we can assume that $\left\{ a,b\right\} \subset X$
so $\hom\left(\left(\left\{ a,b\right\} ,\emptyset\right),\left(X,W_{X}\right)\right)$
includes $\aut\left(F_{\left\{ a,b\right\} }\right)$ it includes
also all automorphisms of free extensions of $F_{\left\{ a,b\right\} }$
and non free extensions as well.  We will use the method presented
in \cite{KOLODNER2020} with modifications to account to the fact
that we are now interested in injective not surjective morphisms. 
\begin{rem}
\label{rem:injective}Let $\Gamma\to\Delta$ be a morphism of $U$-labeled
graphs and let $N_{U}$ be a set of restrictions. Suppose $\left(U,N_{U}\right)$
is a stencil space of $\Delta$.
\begin{enumerate}
\item $\left(U,N_{U}\right)$ is also a stencil space of $\Gamma$
\item If $\Gamma\to\Delta$ is injective then $\mathcal{F}_{\varphi}\left(\Gamma\to\Delta\right)$
is injective for every $\varphi\in\hom\left(\left(U,N_{U}\right),\left(X,W_{X}\right)\right)$.
(This is true generally the assumption that $\left(U,N_{U}\right)$
is a stencil space is unnecessary) 
\item $\core\circ\mathcal{F}_{\varphi}\left(\Gamma\to\Delta\right)=\mathcal{F}_{\varphi}\left(\Gamma\to\Delta\right)$ 
\end{enumerate}
\end{rem}

We can use a surjectivity problem $\left(\Gamma\to\Delta,\left(U,N_{U}\right)\right)$
from \cite{KOLODNER2020} as an injectivity problem. We say that an
injectivity problem resolves positively if all morphisms in $\mathscr{P}=\left\{ \text{Trimf}\circ\core\circ\mathcal{F}_{\varphi}\left(\Gamma\to\Delta\right)|\varphi\in\hom\left(\left(U,N_{U}\right),\left(X,W_{X}\right)\right)\right\} $
are injective. we distinguish three cases 
\begin{enumerate}
\item $\Gamma\to\Delta$ is not injective: clearly $\mathscr{P}$ resolves
negatively.
\item $\Gamma\to\Delta$ is injective and $\left(U,N_{U}\right)$ is a stencil
space of $\Delta$: following Remark \ref{rem:injective}, $\mathscr{P}$
resolves positively. 
\item $\Gamma\to\Delta$ is injective and $W\left(\Delta\right)\backslash N_{U}\neq\varnothing$:
in this case we cannot resolve $\mathscr{P}$ immediately. We call
this the ambiguous case.
\end{enumerate}
If $\mathscr{P}$ is of the ambiguous case we can split to five cases
using FGR. We examine the five new cases and then split again if necessary.
Because of Theorem $3.14$ in \cite{KOLODNER2020} every morphisms
$\varphi\in\hom\left(\left(U,N_{U}\right),\left(X,W_{X}\right)\right)$
either $\mathcal{F}_{\varphi}\left(\Gamma\to\Delta\right)$ isn't
injective or it ends up in a stencil case. Therefore we try to classify
all possible stencil cases that my arise in this process and determine
that they are all positive. In contrasted to the example in \cite{KOLODNER2020}
the graph $\Delta$ does not have stencil finitness therefore we end
this process differently. We notice the by conjugation we can assume
that $b$ is cyclically reduced so instead of $\left(\Gamma\to\Delta,\left(\left\{ a,b\right\} ,\emptyset\right)\right)$
we consider the problem $\left(\Gamma\to\Delta,\left(\left\{ a,b\right\} ,\left\{ b.b^{-1}\right\} \right)\right)$.
We preform a change of coordinates (see \cite{KOLODNER2020}). Let
$V=\left\{ a,b\right\} ,N_{V}=\left\{ b.b^{-1}\right\} $ , and
\[
\sigma:F_{\left\{ a,b\right\} }\to F_{\left\{ a,b\right\} },\quad\sigma\left(\alpha\right)=b,\quad\sigma\left(\beta\right)=aba^{-1}.
\]
We notice that $\left\langle b,aba^{-1}\right\rangle \leq\im\sigma$.
For any non-degenerate $\varphi\colon F_{\{a,b\}}\rightarrow F_{X}$,
the words $\varphi\left(b\right)=\varphi\circ\sigma\left(\alpha\right)$
and $\varphi\left(aba^{-1}\right)=\varphi\circ\sigma\left(\beta\right)$
are conjugate and $b$ is cyclically reduced, hence there exist reduced
words $\overline{y},\overline{u},\overline{v}\in F_{X}$ such that
$\varphi(b)=\overline{u}\cdot\overline{v}$, $\varphi(aba^{-1})=\overline{y}\cdot\overline{v}\cdot\overline{u}\cdot\overline{y}^{-1}$
(in particular, $\overline{v}\overline{u}$ and $\overline{u}\overline{v}$
are cyclically reduced). By non-degeneracy we can also assume $\overline{u}\neq1$,
and if $\overline{v}=1$ then $\overline{u}$ is cyclically reduced.
We perform a change of coordinates according to four possible cases,
with $(U_{i},N_{i})$, $\psi_{i}$ and $\sigma_{i}$ being:\medskip{}
\\
\hspace*{\fill}%
\begin{tabular}{|c|c|c|c|c|c|c|c|c|}
\hline 
\# & $\overline{y}$ & $\overline{v}$ & $U_{i}$ & $N_{i}$ & $\psi_{i}(\alpha),\psi_{i}(\beta)$ & $\sigma_{i}(a),\sigma_{i}(b)$ & $\Gamma_{i}$ & $\Delta_{i}$\tabularnewline
\hline 
\hline 
1 & $\negmedspace=\negmedspace1\negmedspace$ & $\negmedspace=\negmedspace1\negmedspace$ & $u$ & $u.u^{-1}$ & $u,u$ & $u,u$ & \includegraphics[scale=0.5]{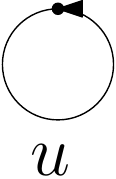} & \includegraphics[scale=0.5]{G2\lyxdot 1}\tabularnewline
\hline 
2 & $\negmedspace\neq\negmedspace1\negmedspace$ & $\negmedspace=\negmedspace1\negmedspace$ & $y,u$ & $y.u^{-1},u.y,u.u^{-1}$ & $u,yuy^{-1}$ & $y,u$ & \includegraphics[scale=0.5]{G2\lyxdot 1} & \includegraphics[scale=0.5]{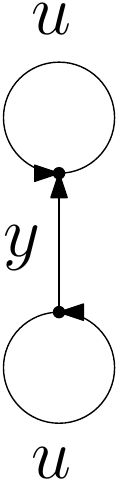}\tabularnewline
\hline 
3 & $\negmedspace=\negmedspace1\negmedspace$ & $\negmedspace\neq\negmedspace1\negmedspace$ & $v,u$ & $v.u^{-1},u.v^{-1}$ & $uv,vu$ & $u^{-1},uv$ & \includegraphics[scale=0.5]{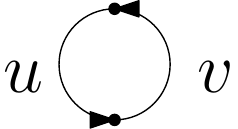} & \includegraphics[scale=0.5]{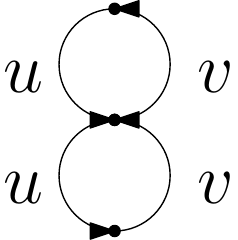}\tabularnewline
\hline 
4 & $\negmedspace\neq\negmedspace1\negmedspace$ & $\negmedspace\neq\negmedspace1\negmedspace$ & $v,u,y$ & $v.u^{-1},u.v^{-1},y.v^{-1},u.y$ & $uv,yvuy^{-1}$ & $yv,uv$ & \includegraphics[scale=0.5]{G4} & \includegraphics[scale=0.5]{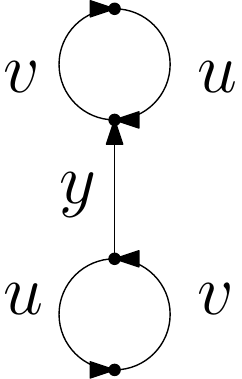}\tabularnewline
\hline 
\end{tabular} 

Case 3 is the problematic case it splits in to new cases indefinitely
this shows that $\left\langle b,aba^{-1}\right\rangle $ does not
have stencil finitness. In order to treat it we define two auxiliary
cases $x$ and $x'$. Cases $x$ and $x'$ include all the subcase
of case 3 but they are more general and includes many more cases that
are irrelevant to the original question. The advantage of using cases
$x$ and $x'$ is that they split into a finite set of ambiguous cases
in contrast to case 3 that splits into new ambiguous cases indefinitely.
In the table is an analysis of all the different cases.

\begin{tabular}{|c|c|c|c|c|c|c|c|}
\hline 
\# & FGR split & Homo. & $N_{i}$ & $\Gamma_{i}$ & $\Delta_{i}$ & $W\left(\Delta_{i}\right)\backslash N_{i}$ & Comment\tabularnewline
\hline 
2' &  &  & $\begin{array}{cc}
u.y, & y.u^{-1}\\
u^{-1}.y^{-1}, & u.u^{-1}
\end{array}$ & \includegraphics[scale=0.5]{G2\lyxdot 1} & \includegraphics[scale=0.5]{D2\lyxdot 1} & $u.y^{-1}$ & triangle rule+symmetry\tabularnewline
\hline 
2.1 & $u.y^{-1}.1$ & $\id$ & $\begin{array}{cc}
u.y, & y.u^{-1}\\
u.y,^{-1} & u.u^{-1}\\
u^{-1}.y^{-1}
\end{array}$ & \includegraphics[scale=0.5]{G2\lyxdot 1} & \includegraphics[scale=0.5]{D2\lyxdot 1} & $\emptyset$ & $\checkmark$\tabularnewline
\hline 
2.2 & $u.y^{-1}.2$ & $\begin{array}{c}
u\mapsto ut\phantom{^{-1}}\\
y\mapsto t^{-1}y
\end{array}$ & $\begin{array}{cc}
t.y, & y.u^{-1}\\
t.u^{-1}, & u.t^{-1}\\
u.y^{-1}, & t^{-1}.y^{-1}
\end{array}$ & \includegraphics[scale=0.5]{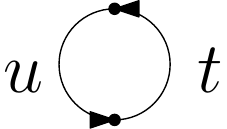} & \includegraphics[scale=0.5]{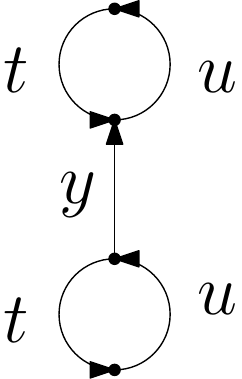} & $\emptyset$ & $\checkmark$\tabularnewline
\hline 
2.3 & $u.y^{-1}.3$ & $\begin{array}{c}
u\mapsto uy^{-1}\\
y\mapsto y\phantom{u^{-1}}
\end{array}$ & $\begin{array}{cc}
u.y, & y.u^{-1}\\
u^{-1}.y^{-1}, & y.y^{-1}
\end{array}$ & \includegraphics[scale=0.5]{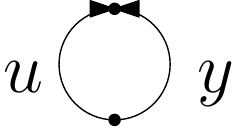} & \includegraphics[scale=0.5]{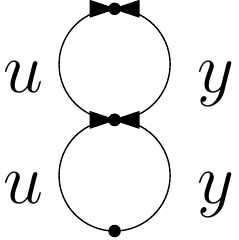} & $\begin{array}{c}
u.u^{-1}\\
u.y^{-1}
\end{array}$ & $\begin{array}{c}
\text{contained in case 3 via}\\
u\mapsto u\\
v\mapsto y^{-1}
\end{array}$\tabularnewline
\hline 
2.4 & $u.y^{-1}.4$ & $\begin{array}{c}
u\mapsto u\phantom{y^{-1}}\\
y\mapsto u^{-1}y
\end{array}$ & $\begin{array}{cc}
u.u^{-1}, & y.u^{-1}\\
u^{-1}.y,^{-1} & u.y
\end{array}$ & \includegraphics[scale=0.5]{G2\lyxdot 1} & \includegraphics[scale=0.5]{D2\lyxdot 1} & $u.y^{-1}$ & equivalent to case 2'\tabularnewline
\hline 
4' &  &  & $\begin{array}{cc}
v.u^{-1}, & u.v^{-1}\\
y.v^{-1}, & u.y\\
u^{-1}.y^{-1},
\end{array}$ & \includegraphics[scale=0.5]{G4} & \includegraphics[scale=0.5]{D4} & $v.y^{-1}$ & triangle rule+symmetry\tabularnewline
\hline 
4.1 & $v.y^{-1}.1$ & $\id$ & $\begin{array}{cc}
v.u^{-1}, & u.v^{-1}\\
y.v^{-1} & u.y\\
u^{-1}.y^{-1} & v.y^{-1}
\end{array}$ & \includegraphics[scale=0.5]{G4} & \includegraphics[scale=0.5]{D4} & $\emptyset$ & $\checkmark$\tabularnewline
\hline 
\end{tabular} 

\begin{tabular}{|c|c|c|c|c|c|c|c|}
\hline 
4.2 & $v.y^{-1}.2$ & $\begin{array}{c}
u\mapsto vt\phantom{^{-1}}\\
y\mapsto t^{-1}y
\end{array}$ & $\begin{array}{cc}
t.u^{-1}, & u.v^{-1}\\
y.v^{-1} & u^{-1}.y^{-1}\\
t^{-1}.y^{-1} & v.y^{-1}\\
v.t^{-1}
\end{array}$ & \includegraphics[scale=0.5]{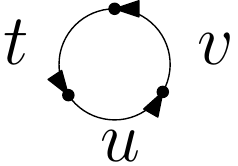} & \includegraphics[scale=0.5]{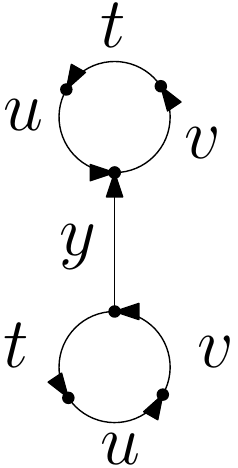} & $\emptyset$ & $\begin{array}{c}
\text{contained in case \ensuremath{2.2} via}\\
u\mapsto v\\
t\mapsto tu
\end{array}$\tabularnewline
\hline 
4.3 & $v.y^{-1}.3$ & $\begin{array}{c}
v\mapsto vy^{-1}\\
y\mapsto y\phantom{v^{-1}}
\end{array}$ & $\begin{array}{cc}
y^{-1}.u^{-1}, & y.v^{-1}\\
u.v^{-1}, & v.y\\
u.y
\end{array}$ & \includegraphics[scale=0.5]{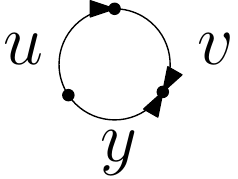} & \includegraphics[scale=0.5]{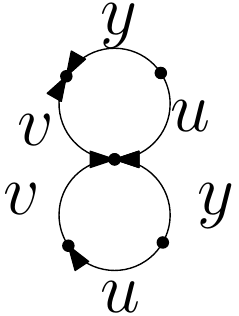} & $\begin{array}{c}
v.v^{-1}\\
v.u
\end{array}$ & $\begin{array}{c}
\text{contained in case 3 via}\\
u\mapsto y^{-1}u\\
v\mapsto v
\end{array}$\tabularnewline
\hline 
4.4 & $v.y^{-1}.4$ & $\begin{array}{c}
v\mapsto v\phantom{y^{-1}}\\
y\mapsto v^{-1}y
\end{array}$ & $\begin{array}{cc}
v.u^{-1}, & u.v^{-1}\\
y.v^{-1}, & u.y\\
v^{-1}.y^{-1}
\end{array}$ & \includegraphics[scale=0.5]{G4} & \includegraphics[scale=0.5]{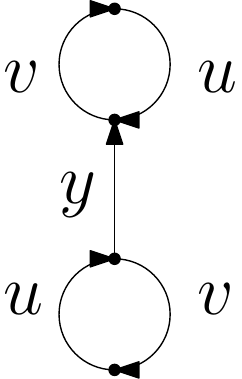} & $u.y^{-1}$ & $\begin{array}{c}
\text{contained in case 2 via}\\
u\mapsto vu\\
y\mapsto y
\end{array}$\tabularnewline
\hline 
3.1 & $u.v.1$ & $\id$ & $\begin{array}{cc}
v.u^{-1}, & u.v^{-1}\\
u.v
\end{array}$ & \includegraphics[scale=0.5]{G4} & \includegraphics[scale=0.5]{D3\lyxdot 1} & $\begin{array}{c}
u.u^{-1}\\
v.v^{-1}\\
v^{-1}.u^{-1}
\end{array}$ & ambiguous\tabularnewline
\hline 
3.2 & $u.v.2$ & $\begin{array}{c}
v\mapsto vt\\
u\mapsto ut
\end{array}$ & $\begin{array}{cc}
t.u^{-1}, & t.v^{-1}\\
v.t^{-1} & u.t^{-1}\\
u.v
\end{array}$ & \includegraphics[scale=0.5]{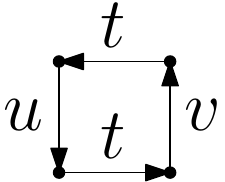} & \includegraphics[scale=0.5]{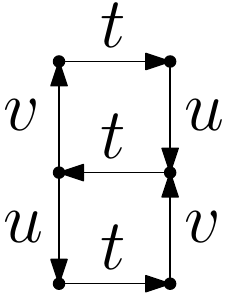} & $\begin{array}{c}
v^{-1}.u^{-1}\end{array}$ & $\begin{array}{c}
\text{contained in \ensuremath{x'} via}\\
x\mapsto t
\end{array}$\tabularnewline
\hline 
3.3 & $u.v.3$ & $\begin{array}{c}
v\mapsto v\phantom{u}\\
u\mapsto uv
\end{array}$ & $\begin{array}{cc}
v.u^{-1}, & v.v^{-1}\\
u.v^{-1}
\end{array}$ & \includegraphics[scale=0.5]{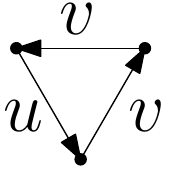} & \includegraphics[scale=0.5]{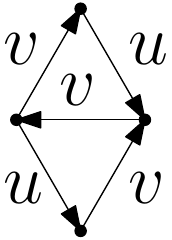} & $\begin{array}{c}
u.v\\
v^{-1}.u^{-1}
\end{array}$ & $\begin{array}{c}
\text{contained in \ensuremath{x} via}\\
x\mapsto v
\end{array}$\tabularnewline
\hline 
3.4 & $u.v.4$ & $\begin{array}{c}
v\mapsto vu\\
u\mapsto u\phantom{v}
\end{array}$ & $\begin{array}{cc}
u.u^{-1}, & u.v^{-1}\\
v.u^{-1}
\end{array}$ & \includegraphics[scale=0.5]{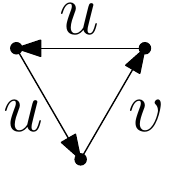} & \includegraphics[scale=0.5]{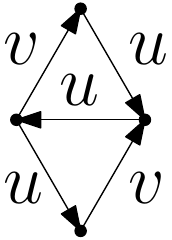} & $\begin{array}{c}
u.v\\
v^{-1}.u^{-1}
\end{array}$ & $\begin{array}{c}
\text{contained in \ensuremath{x} via}\\
x\mapsto u
\end{array}$\tabularnewline
\hline 
$x$ &  &  & $\begin{array}{cc}
v.u^{-1}, & u.v^{-1}\\
x.v^{-1}, & x.u^{-1}\\
u.x^{-1}, & v.x^{-1}
\end{array}$ & \includegraphics[scale=0.5]{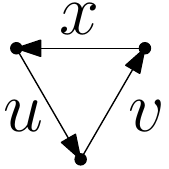} & \includegraphics[scale=0.5]{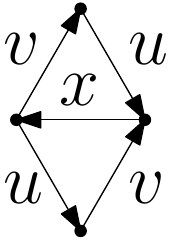} & $\begin{array}{c}
u.v\\
v^{-1}.u^{-1}
\end{array}$ & \tabularnewline
\hline 
$x'$ &  &  & $\begin{array}{cc}
t.u^{-1}, & t.v^{-1}\\
v.t^{-1}, & u.t^{-1}\\
x.u^{-1} & x.v^{-1}\\
x^{-1}.u, & x^{-1}.v\\
u.v
\end{array}$ & \includegraphics[scale=0.5]{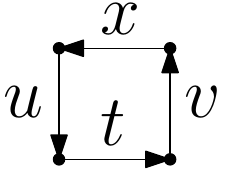} & \includegraphics[scale=0.5]{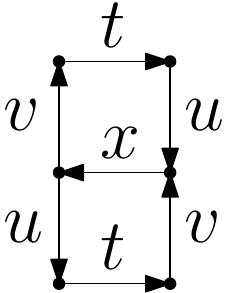} & $v^{-1}.u^{-1}$ & \tabularnewline
\hline 
\end{tabular} 

\begin{tabular}{|c|c|c|c|c|c|c|c|}
\hline 
$x.1$ & $u.v.1$ & $\id$ & $\begin{array}{cc}
v.u^{-1}, & u.v^{-1}\\
x.v^{-1}, & x.u^{-1}\\
u.x^{-1}, & v.x^{-1}\\
u.v
\end{array}$ & \includegraphics[scale=0.5]{G3\lyxdot 4g} & \includegraphics[scale=0.5]{D3\lyxdot 4g} & $u^{-1}.v^{-1}$ & ambiguous\tabularnewline
\hline 
$x.2$ & $u.v.2$ & $\begin{array}{c}
v\mapsto vt\\
u\mapsto ut
\end{array}$ & $\begin{array}{cc}
t.u^{-1}, & t.v^{-1}\\
x.v^{-1}, & x.u^{-1}\\
t.x^{-1}, & t.x^{-1}\\
v.t^{-1} & u.t^{-1}\\
u.v
\end{array}$ & \includegraphics[scale=0.5]{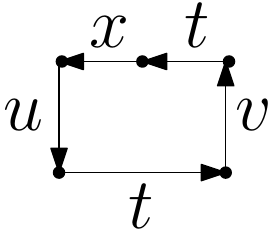} & \includegraphics[scale=0.5]{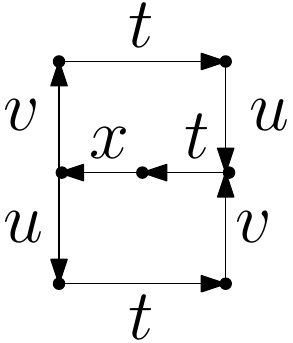} & $u^{-1}.v^{-1}$ & $\begin{array}{c}
\text{contained in \ensuremath{x'} via}\\
x\mapsto tx
\end{array}$\tabularnewline
\hline 
$x$.3 & $u.v.3$ & $\begin{array}{c}
v\mapsto vu\\
u\mapsto u\phantom{v}
\end{array}$ & $\begin{array}{cc}
u.u^{-1}, & u.v^{-1}\\
x.v^{-1}, & x.u^{-1}\\
u.x^{-1}, & v.u^{-1}
\end{array}$ & \includegraphics[scale=0.5]{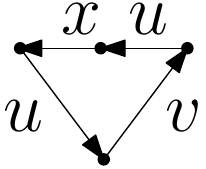} & \includegraphics[scale=0.5]{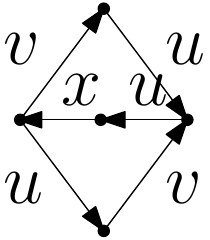} & $\begin{array}{c}
u.v\\
v^{-1}.u^{-1}
\end{array}$ & $\begin{array}{c}
\text{contained in \ensuremath{x} via}\\
x\mapsto ux
\end{array}$\tabularnewline
\hline 
$x.4$ & $u.v.4$ & $\begin{array}{c}
v\mapsto v\phantom{u}\\
u\mapsto uv
\end{array}$ & $\begin{array}{cc}
v.u^{-1}, & v.v^{-1}\\
x.v^{-1}, & x.u^{-1}\\
v.x^{-1}, & u.v^{-1}
\end{array}$ & \includegraphics[scale=0.5]{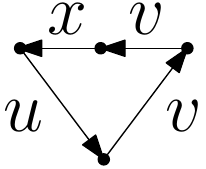} & \includegraphics[scale=0.5]{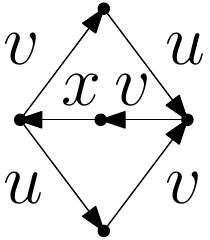} & $\begin{array}{c}
u.v\\
v^{-1}.u^{-1}
\end{array}$ & $\begin{array}{c}
\text{contained in \ensuremath{x} via}\\
x\mapsto vx
\end{array}$\tabularnewline
\hline 
$x.5$ & $u.v.5$ & $\begin{array}{c}
v\mapsto u\\
u\mapsto u
\end{array}$ & $\begin{array}{cc}
u.u^{-1},\\
u.x^{-1}, & x.u^{-1}
\end{array}$ & \includegraphics[scale=0.5]{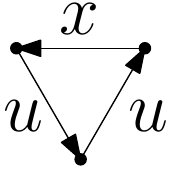} & \includegraphics[scale=0.5]{Dx\lyxdot 5} & $\emptyset$ & $\checkmark$\tabularnewline
\hline 
$x.1.1$ & $u^{-1}.v^{-1}.1$ & $\id$ & $\begin{array}{cc}
v.u^{-1}, & u.v^{-1}\\
x.v^{-1}, & x.u^{-1}\\
u.x^{-1}, & v.x^{-1}\\
u.v & u^{-1}.v^{-1}
\end{array}$ & \includegraphics[scale=0.5]{G3\lyxdot 4g} & \includegraphics[scale=0.5]{D3\lyxdot 4g} & $\emptyset$ & $\checkmark$\tabularnewline
\hline 
$x.1.2$ & $u^{-1}.v^{-1}.2$ & $\begin{array}{c}
v\mapsto tv\\
u\mapsto tu
\end{array}$ & $\begin{array}{cc}
v.t^{-1}, & u.t^{-1}\\
x.t^{-1}, & x.t^{-1}\\
u.x^{-1}, & v.x^{-1}\\
u.v, & t.u^{-1}\\
t.v^{-1}. & u^{-1}.v^{-1}
\end{array}$ & \includegraphics[scale=0.5]{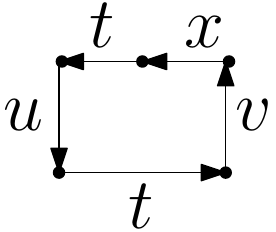} & \includegraphics[scale=0.5]{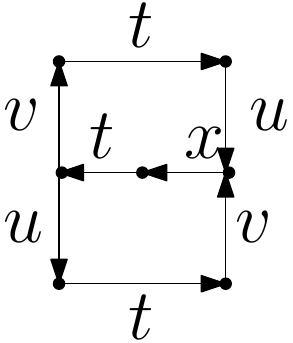} & $\emptyset$ & $\begin{array}{c}
\text{contained in \ensuremath{x}'.1 via}\\
x\mapsto xt
\end{array}$\tabularnewline
\hline 
$x.1.3$ & $u^{-1}.v^{-1}.3$ & $\begin{array}{c}
v\mapsto uv\\
u\mapsto u\phantom{v}
\end{array}$ & $\begin{array}{cc}
v.u^{-1}, & u.u^{-1}\\
x.u^{-1}, & u.v^{-1}\\
u.x^{-1}, & v.x^{-1}\\
u.v
\end{array}$ & \includegraphics[scale=0.5]{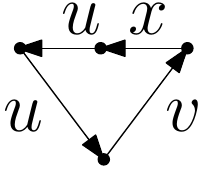} & \includegraphics[scale=0.5]{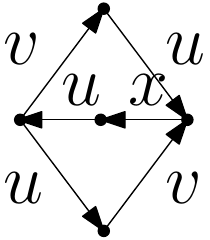} & $u^{-1}.v^{-1}$ & $\begin{array}{c}
\text{contained in \ensuremath{x}.1 via}\\
x\mapsto xu
\end{array}$\tabularnewline
\hline 
$x.1.4$ & $u^{-1}.v^{-1}.4$ & $\begin{array}{c}
v\mapsto v\phantom{u}\\
u\mapsto vu
\end{array}$ & $\begin{array}{cc}
v.v^{-1}, & u.v^{-1}\\
x.v^{-1}, & u.x^{-1}\\
v.x^{-1}, & u.v\\
v.u^{-1}
\end{array}$ &  &  & $u^{-1}.v^{-1}$ & $\begin{array}{c}
\text{contained in \ensuremath{x}.1 via}\\
x\mapsto xv
\end{array}$\tabularnewline
\hline 
\end{tabular} 

\begin{tabular}{|c|c|c|c|c|c|c|c|}
\hline 
$x'$.1 & $v^{-1}.u^{-1}$.1 & $\id$ & $\begin{array}{cc}
t.u^{-1}, & t.v^{-1}\\
v.t^{-1} & u.t^{-1}\\
u.v & x.v^{-1}\\
x.u^{-1} & u.x^{-1}\\
v.x^{-1} & v^{-1}.u^{-1}
\end{array}$ & \includegraphics[scale=0.5]{Gx_} & \includegraphics[scale=0.5]{Dx_} & $v^{-1}.u^{-1}$ & $\checkmark$\tabularnewline
\hline 
$x'$.2 & $v^{-1}.u^{-1}$.2 & $\begin{array}{c}
v\mapsto sv\\
u\mapsto su
\end{array}$ & $\begin{array}{cc}
t.s^{-1}, & t.s^{-1}\\
v.t^{-1} & u.t^{-1}\\
u.v & x.s^{-1}\\
x.s^{-1} & u.x^{-1}\\
v.x^{-1} & v^{-1}.u^{-1}\\
s.v^{-1} & s.u^{-1}
\end{array}$ & \includegraphics[scale=0.5]{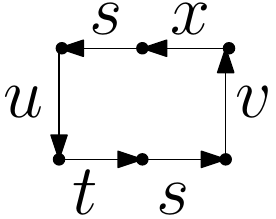} & \includegraphics[scale=0.5]{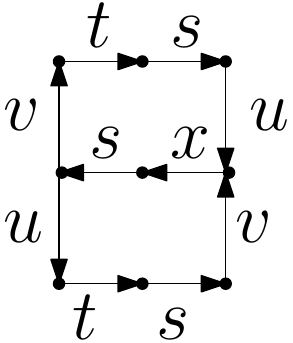} & $\emptyset$ & $\begin{array}{c}
\text{contained in \ensuremath{x}'.1 via}\\
x\mapsto xs\\
u\mapsto ts
\end{array}$\tabularnewline
\hline 
$x'$.3 & $v^{-1}.u^{-1}.3$ & $\begin{array}{c}
v\mapsto uv\\
u\mapsto u\phantom{v}
\end{array}$ & $\begin{array}{cc}
t.u^{-1}, & u.v^{-1}\\
v.t^{-1} & u.t^{-1}\\
u.v & x.u^{-1}\\
v.x^{-1} & u.x^{-1}
\end{array}$ & \includegraphics[scale=0.5]{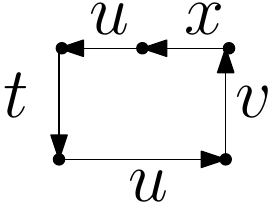} & \includegraphics[scale=0.5]{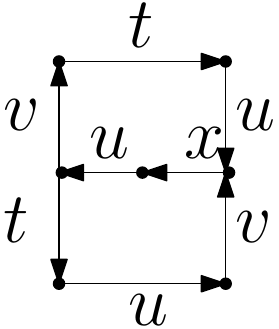} & $v^{-1}.u^{-1}$ & $\begin{array}{c}
\text{contained in \ensuremath{x}.1 via}\\
x\mapsto xu\\
u\mapsto tu
\end{array}$\tabularnewline
\hline 
$x'.4$ & $v^{-1}.u^{-1}.4$ & $\begin{array}{c}
v\mapsto v\phantom{u}\\
u\mapsto vu
\end{array}$ & $\begin{array}{cc}
t.v^{-1}, & v.u^{-1}\\
v.t^{-1}, & u.t^{-1}\\
x.v^{-1}, & u.v\\
x^{-1}.u, & x^{-1}.v
\end{array}$ & \includegraphics[scale=0.5]{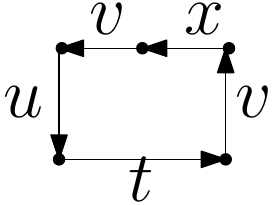} & \includegraphics[scale=0.5]{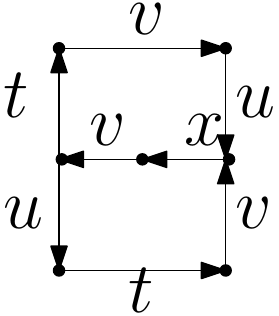} & $v^{-1}.u^{-1}$ & $\begin{array}{c}
\text{contained in \ensuremath{x}.1 via}\\
x\mapsto xv\\
v\mapsto tv
\end{array}$\tabularnewline
\hline 
\end{tabular}

\begin{tabular}{|c|c|c|c|c|c|c|c|}
\hline 
3.1.1 & $v^{-1}.u^{-1}.1$ & $\id$ & $\begin{array}{cc}
v.u^{-1}, & u.v^{-1}\\
v.v^{-1} & u.v
\end{array}$ & \includegraphics[scale=0.5]{G4} & \includegraphics[scale=0.5]{D3\lyxdot 1} & $\begin{array}{c}
u.u^{-1}\\
v.v^{-1}
\end{array}$ & ambiguous\tabularnewline
\hline 
3.1.1.2 & $v.v^{-1}.2$ & $\begin{array}{c}
v\mapsto t^{-1}vt\\
u\mapsto u\phantom{tt^{-1}}
\end{array}$ & $\begin{array}{cc}
t.u^{-1}, & u.t\\
v.v^{-1}, & v.t^{-1}\\
t^{-1}.v
\end{array}$ & \includegraphics[scale=0.5]{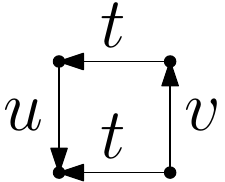} & \includegraphics[scale=0.5]{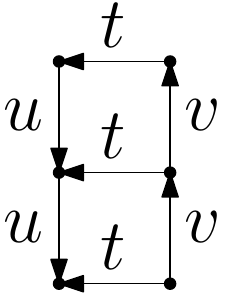} & $u.u^{-1}$ & ambiguous\tabularnewline
\hline 
3.1.1.2.1 & $u.u^{-1}.1$ & $\id$ & $\begin{array}{cc}
t.u^{-1}, & u.t\\
v.v^{-1}, & v.t^{-1}\\
t^{-1}.v, & u.u^{-1}
\end{array}$ &  &  & $\emptyset$ & $\checkmark$\tabularnewline
\hline 
3.1.1.2.2 & $u.u^{-1}.2$ & $\begin{array}{c}
v\mapsto v\phantom{ss^{-1}}\\
u\mapsto s^{-1}us
\end{array}$ & $\begin{array}{cc}
t.s, & v.t^{-1}\\
v.v^{-1}, & t^{-1}.v\\
u.s^{-1}, & u^{-1}.s^{-1}\\
u.u^{-1}
\end{array}$ & \includegraphics[scale=0.5]{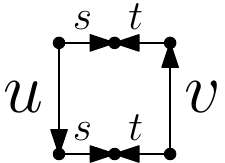} & \includegraphics[scale=0.5]{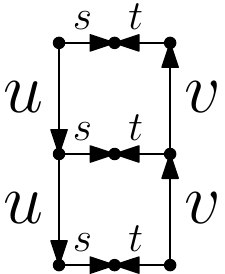} & $\emptyset$ & $\checkmark$\tabularnewline
\hline 
\end{tabular}

\bibliographystyle{plain}
\bibliography{mybibn}

\begin{thebibliography}{10}

\bibitem{meunpub2020}
Sebasti\`{a}~Mijares i~Verd\'{u}.
\newblock Algebraic extensions in free groups and stallings graphs.
\newblock (unpublished masters thesis), 2020.

\bibitem{MR1882114}
Ilya Kapovich and Alexei Myasnikov.
\newblock Stallings foldings and subgroups of free groups.
\newblock {\em J. Algebra}, 248(2):608--668, 2002.

\bibitem{KOLODNER2020}
Noam~M.D. Kolodner.
\newblock On algebraic extensions and decomposition of homomorphisms of free
  groups.
\newblock {\em Journal of Algebra}, 2020.

\bibitem{MR1812024}
Roger~C. Lyndon and Paul~E. Schupp.
\newblock {\em Combinatorial group theory}.
\newblock Classics in Mathematics. Springer-Verlag, Berlin, 2001.
\newblock Reprint of the 1977 edition.

\bibitem{MR2395796}
Alexei Miasnikov, Enric Ventura, and Pascal Weil.
\newblock Algebraic extensions in free groups.
\newblock In {\em Geometric group theory}, Trends Math., pages 225--253.
  Birkh\"{a}user, Basel, 2007.

\bibitem{MR3211804}
Ori Parzanchevski and Doron Puder.
\newblock Stallings graphs, algebraic extensions and primitive elements in
  {$\rm F_2$}.
\newblock {\em Math. Proc. Cambridge Philos. Soc.}, 157(1):1--11, 2014.

\bibitem{MR3264763}
Doron Puder and Ori Parzanchevski.
\newblock Measure preserving words are primitive.
\newblock {\em J. Amer. Math. Soc.}, 28(1):63--97, 2015.

\bibitem{MR0476875}
Jean-Pierre Serre.
\newblock {\em Arbres, amalgames, {${\rm SL}_{2}$}}.
\newblock Soci\'{e}t\'{e} Math\'{e}matique de France, Paris, 1977.
\newblock Avec un sommaire anglais, R\'{e}dig\'{e} avec la collaboration de
  Hyman Bass, Ast\'{e}risque, No. 46.

\bibitem{MR695906}
John~R. Stallings.
\newblock Topology of finite graphs.
\newblock {\em Invent. Math.}, 71(3):551--565, 1983.

\bibitem{MR1714852}
John~R. Stallings.
\newblock Whitehead graphs on handlebodies.
\newblock In {\em Geometric group theory down under ({C}anberra, 1996)}, pages
  317--330. de Gruyter, Berlin, 1999.

\bibitem{MR1575455}
J.~H.~C. Whitehead.
\newblock On {C}ertain {S}ets of {E}lements in a {F}ree {G}roup.
\newblock {\em Proc. London Math. Soc. (2)}, 41(1):48--56, 1936.

\end{thebibliography}
 
\end{document}